\newtheorem{theorem}{Theorem}[section]
\newtheorem{lemma}[theorem]{Lemma}
\newtheorem{proposition}[theorem]{Proposition}
\theoremstyle{definition}
\newtheorem{definition}[theorem]{Definition}
\theoremstyle{remark}
\newtheorem{remark}[theorem]{Remark}
\numberwithin{equation}{section}
\numberwithin{equation}{section}
\newsavebox{\savepar}
\begin{document}

\title{Existence of solution for a system involving fractional Laplacians and a Radon measure}
\author{Amita Soni and D.Choudhuri}

\date{}
\maketitle

\begin{abstract}
\noindent An existence of a nontrivial solution in some `weaker' sense of the following system of equations
\begin{align*}
(-\Delta)^{s}u+l(x)\phi u+w(x)|u|^{k-1}u&=\mu~\text{in}~\Omega\nonumber\\
(-\Delta)^{s}\phi&= l(x)u^2~\text{in}~\Omega\nonumber\\
u=\phi&=0 ~\text{in}~\mathbb{R}^N\setminus\Omega
\end{align*}
 has been proved. Here $s \in (0,1)$, $l,w$ are bounded nonnegative functions in $\Omega$, $\mu$ is
a Radon measure and $k > 1$ belongs to a certain range.
\end{abstract}
\begin{flushleft}

{\bf Keywords}:~ Marcinkiewicz space, Capacity, subdifferential, Radon measure.\\
{\bf AMS Subject Classification}:~35J35, 35J60
\end{flushleft}

\section{Introduction}
Fractional Calculus is a new tool which has been off-late employed to model difficult biological systems with nonlinear behavior. The notion of a fractional calculus came into being to answer some simple questions which were related to the notion of derivatives such as, the first order derivative represents the slope
of a function, what does a half-an-order derivative of a function geometricallly mean?. In a quest to seek answers to such questions, a new avenue of a bridge between the mathematical and the real world was discovered, which led to many questions besides its answers.\\
Meanwhile, with the rapid advancement in the field of elliptic PDE, one of the leading subject of interest for researchers in Mathematics are elliptic problems involving measure data. The presence of a measure data in the problem makes it difficult to apply any well known variational methods to prove the existence of solution(s). Some remarkable works to deal with such kind of situations can be seen in \cite{veron}, \cite{bocc}, \cite{Ponce}, \cite{Marcus}, \cite{Vecchio}, \cite{Yang} and  the references therein. In \cite{bocc}, the authors have proved the existence of a weak solution of the problem involving a positive Radon measure and a Carath\'{e}odory function which are assumed to satisfy certain conditions. In \cite{Vecchio}, the author has showed the existence of weak solutions of problem involving two caratheodory functions with right hand side a bounded Radon measure. In \cite{Ponce}, the authors have \textit{determined} the reduced limit to the nonhomogeneous part of a semilinear problem with the Laplacian operator which is a Radon measure. The readers may further refer to the book due to Marcus and V\'{e}ron \cite{Marcus} which may be used as a ready reckoner to concepts on Elliptic PDEs with measure datum. In \cite{Yang}, the authors have studied the existence of nontrivial weak solutions in a general regular domain which is not necessarily bounded for a fractional Laplacian operator. Recently,  Chen and V\'{e}ron \cite{veron},  have proved the existence and uniqueness of a very weak solution of a fractional Laplacian problem involving a Radon measure and also showed that absolutely continuity of this measure with respect to some Bessel capacity is a necessary and sufficient condition for the existence of a very weak nontrivial solution. Since the current work is on a system of equations, which resembles a Schr\"{o}dinger-Poisson system, hence it is customary to refer to some important works on a Schr\"{o}dinger-Poisson system of equations can be found in \cite{zhang}, \cite{cingo}, \cite{dimi} and the references therein. Zhang et al \cite{zhang} have studied the nonlinear Schr\"{o}dinger-Poisson system and have proved the existence of positive solution over $\mathbb{R}^3$.  Dimitri Mugnai \cite{dimi} has studied the solitary waves of a nonlinear Schr\"{o}dinger-Poisson system and have guaranteed the existence of radially symmetric solution over $\mathbb{R}^3$. Further, Cingolani et al \cite{cingo} has guaranteed in the existence of high energy solution over $\mathbb{R}^2$. Motivated by \cite{veron}, in this paper we considered a system of PDEs which is as follows. 
\begin{align}\label{maineqn}
A:~(-\Delta)^{s}u+l(x)\phi u+w(x)|u|^{k-1}u&=\mu~\text{in}~\Omega\nonumber\\
B:~~~~~~~~~~~~~~~~~~~~~~~~~~~~~~~~~(-\Delta)^{s}\phi&= l(x)u^2~\text{in}~\Omega\nonumber\\
u=\phi&=0 ~\text{in}~\mathbb{R}^N\setminus\Omega
\end{align}
has been proved. The first equation in the system defined in (\ref{maineqn}) will be denoted as `problem A' and the second equation as `problem B.' Here $s \in (0,1)$, $l,w$ are bounded nonnegative functions in $\Omega$, $\mu$ is
a Radon measure and $k > 1$ belongs to a certain range.
We will prove the existence and uniqueness of a nontrivial, solution to the  system of equations (\ref{maineqn}) in a weaker sense which will be defined in the succeeding section. Further, we will also prove a necessary and sufficient condition for the existence of a solution. 
\section{Important results and definitions}
We state a few definitions, lemmas, theorems and propositions along
with the notations which will be consistently used by us in the
succeeding section(s).
\begin{definition}
For $p\in[1,\infty)$, the fractional Sobolev space $W^{s,p}(\Omega)$ is defined as $$W^{s,p}(\Omega)=\left\lbrace u\in L^{p}(\Omega): \frac{|u(x)-u(y)|}{|x-y|^{\frac{n}{p}+s}}\in L^{p}(\Omega\times\Omega)\right\rbrace$$ with the norm 
$$||u||_{s,p}=\left(\int_{\Omega}|u|^{p}dx+\int_{\Omega}\int_{\Omega}\frac{|u(x)-u(y)|^{p}}{|x-y|^{n+sp}}dx dy\right)^{\frac{1}{p}}.$$
\end{definition}
\noindent We now give the definition of a `{\it very weak solution}' as defined in \cite{veron}.
\begin{definition}\label{vws}
We say that $u$ is a very weak solution of the problem
\begin{align}
(P):~~(-\Delta)^{s}u+g(u)&=\mu~\text{in}~\Omega\nonumber\\
u&=0 ~\text{in}~\mathbb{R}^N\setminus\Omega, \label{ineq2}
\end{align}
if $u\in L^1(\Omega)$, $g(u)\in L^1(\Omega,\rho^s dx)$ and
\begin{equation}
\int_{\Omega}[u(-\Delta)^s\xi+g(u)\xi]dx=\int_{\Omega}\xi
d\mu\label{ineq2'}
\end{equation}
$\forall\xi\in X_s$ and $X_{s}\subset C(\mathbb{R}^N)$ satisfying
the following.
\begin{enumerate}
\item $supp(\xi)\subset\overline{\Omega}$
\item $(-\Delta)^s\xi$ exists for all $x\in\Omega$ and
$|(-\Delta)^s\xi|\leq C$ for some $C>0$,
\item  $\exists\;\phi\in
L^{1}(\Omega,\rho^{s}dx)$, $\epsilon_0>0$ such that
$|(-\Delta)_{\epsilon}^s\xi|\leq \phi$ a.e. in $\Omega$,
$\forall\epsilon\in (0,\epsilon_0]$. Here
$(-\Delta)^s_{\epsilon}u(x)=-C(N,s)\int\frac{u(z)-u(x)}{|z-x|^{N+2s}}\chi_{\epsilon}(|x-z|)dz$.
\end{enumerate}
Here $\mu\in\mathfrak{m}(\Omega,\rho^{\beta})$ is a Radon measure
for $0 \leq \beta \leq s$, $0<s<1$.
\end{definition}
\begin{definition}
The critical exponent is defined as
\[k(s,\beta)=\begin{cases}
      \frac{N}{N-2s} & \beta\in [0,\frac{N-2s}{N}s), \\
      \frac{N+s}{N-2s+\beta} & \beta\in
      (\frac{N-2s}{N}s,s]
    \end{cases}\]
    for $N\geq 2$, $0<s<1$, $0<\beta<s$ and $g(.)\in L^1(\Omega,\rho^s)$ is a nonlinear function such hat $g(0)=0$.\\
\end{definition}
\begin{definition}
Let $\Omega\subset\mathbb{R}^N$ be a domain and $\mu$ be a positive
Borel measure in $\Omega$. For $k>1$, $k'=\frac{k}{k-1}$ and $u\in
L_{loc}^1(\Omega,d\mu)$, we define the {\it Marcinkiewicz space} as
\begin{equation}
M^k(\Omega,d\mu)=\{u\in
L_{loc}^{1}(\Omega,d\mu):||u||_{M^k(\Omega,d\mu)}<\infty\}\nonumber
\end{equation}
where $||u||_{M^k(\Omega,d\mu)}=\inf\{c\in
[0,\infty]:\int_{A}|u|d\mu\leq
c\left(\int_{A}d\mu\right)^{1/k'},\forall A\subset\Omega~\text{Borel
set}\}$.
\end{definition}
\noindent The following propositions from \cite{veron} will play a crucial role in this work.
\begin{proposition}
If $f\in C^{\gamma}(\overline{\Omega})$ for $\gamma > 0$, $\exists$
a very weak solution $u\in X_s$ of the problem
\begin{align}
(-\Delta)^s u&= f~\text{in}~\Omega\nonumber\\
u&=0~\text{in}~\mathbb{R}^N\setminus\Omega.
\end{align}
\begin{proposition}
If $f\in L^1(\Omega,\rho^s dx)$, there exists a unique weak solution
$u$ of the problem
\begin{align}
(-\Delta)^s u&=f~\text{in}~\Omega\nonumber\\
u&=0~\text{in}~\mathbb{R}^N\setminus\Omega.
\end{align}
For any $\xi\in X_s$, $\xi\geq 0$, we have
\begin{equation}
\int_{\Omega}|u|(-\Delta)^s\xi dx\leq
\int_{\Omega}\xi\text{sign}(u)fdx
\end{equation}
\begin{equation}
\int_{\Omega}u_{+}(-\Delta)^s\xi dx\leq
\int_{\Omega}\xi\chi_{\{x:u(x)\geq 0\}}fdx.
\end{equation}
\end{proposition}
\end{proposition}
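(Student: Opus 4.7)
The plan is to establish existence and uniqueness by the standard $L^{1}$-approximation scheme (approximating the measure datum by $C^{\gamma}$ data and invoking the preceding proposition), and then to deduce the two Kato-type inequalities by proving their pointwise counterparts for smooth solutions and passing to the limit.

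First, I would choose a sequence $(f_{n}) \subset C^{\gamma}(\overline{\Omega})$ with $f_{n} \to f$ in $L^{1}(\Omega,\rho^{s}dx)$ (say by convolution and truncation). The preceding proposition yields very weak solutions $u_{n} \in X_{s}$ of $(-\Delta)^{s}u_{n}=f_{n}$ in $\Omega$ with zero exterior data. The key a priori estimate is the $L^{1}$ bound $\|u_{n}\|_{L^{1}(\Omega)} \le C\|f_{n}\|_{L^{1}(\Omega,\rho^{s}dx)}$, obtained by duality: for any $h \in L^{\infty}(\Omega)$, solve $(-\Delta)^{s}\xi = h$ in $\Omega$ with $\xi = 0$ outside to get $\xi \in X_{s}$ satisfying $|\xi(x)| \le C\|h\|_{\infty}\rho(x)^{s}$ (a consequence of the Hopf-type boundary behaviour for $(-\Delta)^{s}$), and then use $\int u_{n} h \, dx = \int \xi f_{n}\, dx$. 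Applied to differences $u_{n}-u_{m}$ this shows $(u_{n})$ is Cauchy in $L^{1}(\Omega)$; let $u$ be its limit.

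Next I pass to the limit in the very weak formulation $\int u_{n}(-\Delta)^{s}\xi\, dx = \int \xi f_{n}\, dx$. For any fixed $\xi \in X_{s}$, $(-\Delta)^{s}\xi$ is bounded (clause 2 of Definition \ref{vws}) so the left side converges by $L^{1}$ convergence of $u_{n}$; the right side converges because $|\xi| \le C\rho^{s}$ (from $\xi = 0$ outside $\Omega$ together with the regularity built into $X_{s}$) and $f_{n} \to f$ in $L^{1}(\Omega,\rho^{s}dx)$. Thus $u$ is a very weak solution. Uniqueness follows from linearity and the same $L^{1}$ estimate applied to the difference of two solutions.

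For the inequalities I would first establish, for the smooth approximants, the pointwise nonlocal Kato inequalities
\[
(-\Delta)^{s}|u_{n}|(x) \le \mathrm{sign}(u_{n}(x))f_{n}(x), \qquad (-\Delta)^{s}(u_{n})_{+}(x) \le \chi_{\{u_{n}\ge 0\}}(x)f_{n}(x),
\]
via the convexity inequality $(-\Delta)^{s}F(v)(x) \le F'(v(x))(-\Delta)^{s}v(x)$ for convex Lipschitz $F$ with $F(0)=0$, applied to the regularisations $F_{\varepsilon}(t)=\sqrt{t^{2}+\varepsilon^{2}}-\varepsilon$ and $F_{\varepsilon}(t)=\sqrt{t_{+}^{2}+\varepsilon^{2}}-\varepsilon$, and letting $\varepsilon \downarrow 0$. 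Multiplying by $\xi\ge 0$ with $\xi\in X_{s}$, integrating and invoking self-adjointness $\int F(u_{n})(-\Delta)^{s}\xi\,dx = \int \xi\,(-\Delta)^{s}F(u_{n})\,dx$ (justified by condition 3 of Definition \ref{vws}) gives the two inequalities at level $n$. Using $|u_{n}|\to|u|$ and $(u_{n})_{+}\to u_{+}$ in $L^{1}(\Omega)$ on the left, and $f_{n}\to f$ in $L^{1}(\Omega,\rho^{s}dx)$ on the right (picking an a.e.-convergent subsequence to handle $\mathrm{sign}(u_{n})$ and $\chi_{\{u_{n}\ge 0\}}$), the desired inequalities pass to the limit.

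The main obstacle is the limit in the sign factor. The map $v\mapsto \mathrm{sign}(v)$ is discontinuous precisely on $\{v=0\}$, so one must argue that either $|\{u=0\}|=0$ or handle that set separately by the inequality $\mathrm{sign}(u_{n})f_{n}\chi_{\{u\ne 0\}} \to \mathrm{sign}(u)f\chi_{\{u\ne 0\}}$ combined with a Fatou bound on the vanishing set, and similarly for $\chi_{\{u_{n}\ge 0\}}$. The same care is needed when justifying the $\varepsilon\to 0$ limit in the convexity inequality for the nonlocal operator, where the singular kernel forces one to rely on the uniform domination from clause 3 of Definition \ref{vws} to apply dominated convergence.
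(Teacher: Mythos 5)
The paper does not prove this statement: these nested propositions are imported verbatim from Chen and V\'eron \cite{veron} (the text introduces them with ``The following propositions from \cite{veron} will play a crucial role''), so there is no in-paper proof to compare against. Your proposal reconstructs, essentially correctly, the argument of that reference: approximation of $f$ by $C^{\gamma}(\overline{\Omega})$ data, the duality estimate $\|u_n\|_{L^1(\Omega)}\le C\|f_n\|_{L^1(\Omega,\rho^s dx)}$ obtained from the barrier $|\xi|\le C\|h\|_{\infty}\rho^s$ for the dual problem, passage to the limit in the very weak formulation, and the Kato-type inequalities via the pointwise convexity inequality $(-\Delta)^sF(v)\le F'(v)(-\Delta)^s v$ for regularisations of $|\cdot|$ and $(\cdot)_+$. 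Two small points deserve attention beyond what you flagged. First, the first proposition only guarantees solvability of the dual problem $(-\Delta)^s\xi=h$ for $h\in C^{\gamma}$, not for arbitrary $h\in L^{\infty}$; this is harmless because it suffices to test against smooth $h$ with $\|h\|_{\infty}\le 1$ (approximating $\mathrm{sign}(u_n-u_m)$), but it should be said. Second, your treatment of the discontinuity of $\mathrm{sign}$ on $\{u=0\}$ is the genuinely delicate step and is left as a sketch; in \cite{veron} this is handled by establishing the inequality at the level of the classical approximants (using the composition formula for $(-\Delta)^s(\gamma\circ u)$, which appears later in this paper as Lemma 4.3) and keeping the regularised nonlinearity until after the integration against $\xi$, which avoids ever taking an a.e.\ limit of $\mathrm{sign}(u_n)$ on the zero set. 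With those caveats your route is the standard one and is sound.
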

\begin{remark}
The central idea is to reduce the system of equations to a scalar equation consisting of one unknown and guarantee the existence of a solution in the sense of Definition \ref{vws}, i.e., in a very weak sense.
\end{remark}
\begin{definition}
We will define a nontrivial solution to the system of PDEs in (\ref{maineqn}) as a pair $(u,\phi)$ if $u\neq 0$, $\phi\neq 0$ and $(u,\phi)$ {\it solves} (\ref{maineqn}).
\end{definition}
\section{Main results}
\noindent We consider the system of PDEs
\begin{align}
(-\Delta)^{s}u+l(x)\phi u+w(x)|u|^{k-1}u&=\mu~\text{in}~\Omega\nonumber\\
(-\Delta)^{s}\phi&= l(x)u^2~\text{in}~\Omega\nonumber\\
u=\phi&=0 ~\text{in}~\mathbb{R}^N\setminus\Omega,\label{ineq1}
\end{align}
where $s \in (0,1)$, $l,w$ are bounded nonnegative functions in $\Omega\subset\mathbb{R}^N$, $\mu$ is
a Radon measure, $1 \leq k \leq k_{s,\beta}$. We use the notations
used by Chen and V\'{e}ron \cite{veron} in their paper. 
The sense of
The system can be converted to a scalar equation of the type in \cite{veron} if one uses the following representation, due to \cite{stinga}, for $\phi$ in terms of $u$.
\begin{equation}
\phi_{u}(x)=C(N,s)\int_{\Omega}\frac{l(y)(u(y))^2}{|x-y|^{N-2s}}dy.\label{ineq4}
\end{equation}
Thus ($\ref{ineq1}$) can be expressed as
\begin{align}
(-\Delta)^{s}u+F[u]+w(x)|u|^{k-1}u&=\mu~\text{in}~\Omega\nonumber\\
u&=0 ~\text{in}~\mathbb{R}^N\setminus\Omega \label{ineq5}
\end{align}
where $F[u]=l(x)\phi_u(x)u$. As stated in \cite{veron}, the problem
\begin{align}
(-\Delta)^{s}u+g(u)&=\mu~\text{in}~\Omega\nonumber\\
u&=0 ~\text{in}~\mathbb{R}^N\setminus\Omega \label{ineq2}
\end{align}
where $g$ is a continuous, non decreasing function satisfying
$rg(r)\geq 0$ $\forall r\in\mathbb{R}$ and
$\int_{1}^{\infty}(g(t)-g(-t))t^{-1-k_{s,\beta}}dt<\infty$ admits a
unique very weak solution $u_{\mu}$ corresponding to
$\mu\in\mathfrak{m}(\Omega,\rho^{\beta})$. Further
\begin{equation}
-\mathbb{G}[\mu_{-}]\leq u_{\mu} \leq \mathbb{G}[\mu_{+}]~\text{a.e. in}~\Omega \label{ineq3'}
\end{equation}
where $\mu_{-}$, $\mu_{+}$ are the positive and the negative parts of the Jordan decomposition of $\mu$. Note that when $g(x,u)=u+F[u]+w(x)|u|^{k-1}u$, in (\ref{maineqn}), it satisfies the assumptions made on $g$ in \cite{veron}. We now state the Theorem 1.1 in \cite{veron}.
\begin{theorem}
[Theorem 1.1, \cite{veron}] Assume that $\Omega\subset\mathbb{R}^N$	($N \geq 2$) is an open bounded $C^2$ domain, $\alpha\in (0,1)$, $\beta\in[0,\alpha]$ and $k_{\alpha,\beta}$ is defined by . Let $g:\mathbb{R}\rightarrow\mathbb{R}$ be a continuous nondecreasing function satisfying $g(r)r\geq 0$, $\forall r\in\mathbb{R}$ and $\int_{1}^{\infty}(g(s)-g(-s))s^{-1-1k_{\alpha,\beta}}ds<\infty$. Then for any $\nu\in\mathfrak{m}(\Omega,\rho^{\beta})$, problem (\ref{ineq2}) admits a unique weak solution $u_{\nu}$. Furthermore, the mapping: $\nu\mapsto u_{\nu}$ is increasing and $\mathbb{G}[\nu_{-}]\leq u_{\nu}\leq \mathbb{G}[\nu_{+}]$ a.e. in $\Omega$, where $\nu_{+}$ and $\nu_{-}$ are respectively the positive and negative part in the Jordan decomposition of $\nu$.
\end{theorem}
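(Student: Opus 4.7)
The plan is to proceed by approximation: regularize the datum $\nu$ to a sequence of H\"older continuous measures $\nu_n$, solve the regularized problems through a Schauder fixed point based on the Green operator $\mathbb{G}$, derive uniform Marcinkiewicz-type a priori estimates, and then pass to the limit using equi-integrability of the nonlinearity. Uniqueness, monotonicity, and the sandwich bound will all follow from the fractional Kato inequality contained in the second cited proposition. Concretely, I would first write $\nu=\nu_+-\nu_-$ in Jordan form and mollify each part (after a cut-off away from $\partial\Omega$ to control the weight $\rho^\beta$) to get $\nu_n^\pm\in C^\gamma(\overline{\Omega})$ with $\nu_n^\pm\to\nu_\pm$ weakly-$*$ in $\mathfrak{m}(\Omega,\rho^\beta)$ and $\|\nu_n^\pm\|_{\mathfrak{m}(\Omega,\rho^\beta)}\to\|\nu_\pm\|_{\mathfrak{m}(\Omega,\rho^\beta)}$. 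For $\nu_n:=\nu_n^+-\nu_n^-$, the first cited proposition furnishes $\mathbb{G}[\nu_n]\in X_s$, and Schauder's theorem applied to $T(v):=\mathbb{G}[\nu_n-g(v)]$ on the order interval $[-\mathbb{G}[|\nu_n|],\mathbb{G}[|\nu_n|]]$ produces a very weak solution $u_n\in X_s$; invariance of this interval uses the sign condition $rg(r)\ge 0$ together with the comparison in the second cited proposition.

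For the uniform estimates, the pointwise bound $|u_n|\le \mathbb{G}[|\nu_n|]$ combined with the standard weak-type estimate
\begin{equation*}
\bigl|\{x\in\Omega:\mathbb{G}[|\nu_n|](x)>t\}\bigr|_{\rho^\beta dx}\;\le\;C\,t^{-k_{\alpha,\beta}}\|\nu\|_{\mathfrak{m}(\Omega,\rho^\beta)}
\end{equation*}
places $\{u_n\}$ uniformly in the Marcinkiewicz space $M^{k_{\alpha,\beta}}(\Omega,\rho^\beta dx)$. Applying the layer-cake formula to $g(u_n)$ and invoking the subcritical hypothesis $\int_1^\infty (g(s)-g(-s))s^{-1-k_{\alpha,\beta}}ds<\infty$ delivers a uniform $L^1(\Omega,\rho^s dx)$ bound on $g(u_n)$ and, by a De la Vall\'ee Poussin argument, equi-integrability. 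A Riesz--Fr\'echet--Kolmogorov compactness argument then extracts $u_n\to u$ pointwise a.e., so Vitali's theorem yields $g(u_n)\to g(u)$ in $L^1(\Omega,\rho^s dx)$, and passing to the limit in the very weak formulation against any $\xi\in X_s$ identifies $u$ as a very weak solution associated to $\nu$.

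For monotonicity and uniqueness, suppose $\nu_1\le\nu_2$ with associated solutions $u_1,u_2$. Subtracting the equations and applying the Kato-type inequality in the second cited proposition to $w:=u_1-u_2$, for any $\xi\in X_s$ with $\xi\ge 0$ one obtains
\begin{equation*}
\int_\Omega w_+(-\Delta)^s\xi\,dx \;\le\; \int_\Omega \xi\,\chi_{\{w>0\}}\bigl[(\nu_1-\nu_2)-(g(u_1)-g(u_2))\bigr]\,dx \;\le\;0,
\end{equation*}
since on $\{w>0\}$ the monotonicity of $g$ gives $g(u_1)\ge g(u_2)$ and globally $\nu_1\le\nu_2$. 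Choosing test functions whose $(-\Delta)^s\xi$ approximates $\chi_{\{w>0\}}$ forces $w_+\equiv 0$, yielding monotonicity and (with $\nu_1=\nu_2$) uniqueness. Since $g(0)=0$, the functions $\pm\mathbb{G}[\nu_\pm]$ solve the PDE with data $\pm\nu_\pm$, and the same comparison applied to $-\nu_-\le\nu\le\nu_+$ delivers the sandwich $-\mathbb{G}[\nu_-]\le u_\nu\le \mathbb{G}[\nu_+]$.

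The main obstacle will be the passage to the limit in the nonlinearity: the Marcinkiewicz estimate only controls $u_n$ in weak $L^{k_{\alpha,\beta}}(\Omega,\rho^\beta dx)$, while identifying $\lim g(u_n)=g(u)$ in $L^1(\Omega,\rho^s dx)$ demands genuine equi-integrability. The subcritical integral condition on $g$ is precisely calibrated to this weak-$L^{k_{\alpha,\beta}}$ bound, and reconciling these two ingredients so as to rule out concentration of $g(u_n)$ on small sets (including near $\partial\Omega$, where the weight $\rho^s$ matters) is the central technical point on which the entire approximation scheme rests.
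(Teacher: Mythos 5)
Your construction of solutions for the regularized data has a genuine gap. The map $T(v)=\mathbb{G}[\nu_n-g(v)]$ does \emph{not} leave the order interval $[-\mathbb{G}[|\nu_n|],\mathbb{G}[|\nu_n|]]$ invariant: writing $T(v)=\mathbb{G}[\nu_n]-\mathbb{G}[g(v)]$, the lower bound $T(v)\geq-\mathbb{G}[|\nu_n|]$ would require $\mathbb{G}[g(v)]\leq\mathbb{G}[\nu_n]+\mathbb{G}[|\nu_n|]$ for \emph{every} $v$ in the interval, and this already fails for $g(r)=\lambda r$ with $\lambda$ large when $v$ is taken at the top of the interval. The sign condition $rg(r)\geq 0$ yields the bound $|u|\leq\mathbb{G}[|\nu_n|]$ only for \emph{fixed points} of $T$ (via the comparison in the second cited proposition), not for arbitrary images $T(v)$, so Schauder's theorem cannot be applied on that set as stated. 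To repair this you must either truncate $g$, apply Schauder (or degree theory) on a large ball, and recover the true solution from an a priori estimate on fixed points, or do what the paper does: for $f\in L^2(\Omega)$ obtain a solution by monotone-operator/subdifferential methods (coercivity of the functional $I$ and Browder--Minty surjectivity of $\partial I$), and then extend to $f\in L^1(\Omega,\rho^{s}dx)$ by truncating $f$ and using the contraction estimate $\|u_1-u_2\|_{L^1(\Omega)}+\|g(u_1)-g(u_2)\|_{L^1(\Omega,\rho^{s}dx)}\leq\|f_1-f_2\|_{L^1(\Omega)}$ to produce Cauchy sequences.

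Apart from that step, your scheme for the measure-data stage is essentially the one the paper follows (after Chen--V\'eron): approximate $\nu$ by smooth data with uniformly bounded $\mathfrak{m}(\Omega,\rho^{\beta})$ norms, test against $\eta_1$ solving $(-\Delta)^s\eta_1=1$ to bound $g(u_n)$ in $L^1(\Omega,\rho^{s}dx)$, place $u_n$ uniformly in the Marcinkiewicz space $M^{k_{\alpha,\beta}}$, use the compactness of $\mathbb{G}$ from $L^1(\Omega,\rho^{\beta}dx)$ into $L^q(\Omega)$ to extract an a.e.\ convergent subsequence, and combine the layer-cake estimate on $\tilde{g}$ with the subcritical integral condition to get equi-integrability and conclude by Vitali. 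Your treatment of uniqueness, monotonicity and the sandwich bound via the Kato inequality is a legitimate alternative to the paper's appeal to the contraction estimate (which is itself a consequence of Kato); just note that $\mathbb{G}[\nu_{+}]$ is a \emph{supersolution} of the semilinear problem rather than a solution, which is all the comparison argument needs, and that for monotonicity with measure data the Kato inequality must be applied at the level of the approximating problems, where the right-hand sides are functions.
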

\noindent The $\mathbb{G}[.]$ is the notation for the Green's operator.
\begin{remark}
Note that, whenever we say a solution exists it will always mean in the {\it very weak} sense as in Definition 
\end{remark}
The main results proved in this paper are as follows.
\begin{theorem}
The problem (3.1) admits a unique very weak solution $(u,\phi)$ corresponding to
$\mu\in\mathfrak{m}(\Omega,\rho^{\beta})$. Further
\begin{equation}
-\mathbb{G}[\mu_{-}]\leq u \leq \mathbb{G}[\mu_{+}]~\text{a.e.
in}~\Omega \label{ineq3'}
\end{equation}
where $\mu_{-}$, $\mu_{+}$ are the positive and the negative parts of the Jordan decomposition of $\mu$.
\end{theorem}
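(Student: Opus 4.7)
The plan is to reduce the coupled system (\ref{ineq1}) to a single scalar nonlocal equation in $u$ and then solve it by a Schauder-type fixed point argument that invokes Theorem 1.1 of \cite{veron}. Since problem B is a linear fractional Poisson equation with non-negative right-hand side $l(x)u^2$, for every sufficiently regular $u$ it admits a unique non-negative very weak solution given by the Green's representation $\phi_u$ in (\ref{ineq4}). Consequently, any solution of (\ref{ineq1}) must solve the scalar problem (\ref{ineq5}) with $\phi$ replaced by $\phi_u$, and conversely any such scalar solution $u$ produces the pair $(u,\phi_u)$.

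To produce a scalar solution I would set up the map $\mathcal{F}:\phi\mapsto\phi_{u_\phi}$ on a convex set $K$ of non-negative functions, where $u_\phi$ is, for each $\phi\in K$, the unique very weak solution of the frozen-coefficient scalar problem
\begin{equation*}
(-\Delta)^{s}u+l(x)\phi(x)u+w(x)|u|^{k-1}u=\mu\ \text{in}\ \Omega,\quad u=0\ \text{in}\ \mathbb{R}^{N}\setminus\Omega,
\end{equation*}
supplied by Theorem 1.1 of \cite{veron}. The hypotheses of that theorem hold with $\phi$ frozen: the nonlinearity $g_\phi(x,r)=l(x)\phi(x)r+w(x)|r|^{k-1}r$ is continuous, non-decreasing in $r$, satisfies $g_\phi(x,r)r\ge 0$, and the Dini-type condition $\int_{1}^{\infty}(g_\phi(x,t)-g_\phi(x,-t))t^{-1-k_{s,\beta}}dt<\infty$ follows from $k\le k_{s,\beta}$ together with the boundedness of $l,w,\phi$. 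The bound (\ref{ineq3'}) then falls out directly from Chen--V\'eron applied to $u_\phi$, \emph{uniformly in} $\phi\in K$, and plugging this bound into (\ref{ineq4}) yields a uniform bound on $\phi_{u_\phi}$ that pins down the invariant set $K$.

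To close the argument I would check that $\mathcal{F}$ is continuous and compact on $K$. Continuity reduces to continuous dependence of $u_\phi$ on the bounded potential $l\phi$, together with the obvious continuity of $u\mapsto\phi_u$ via (\ref{ineq4}). Compactness is extracted from the Riesz-potential structure of (\ref{ineq4}): since $lu_\phi^{2}$ is uniformly bounded in the Marcinkiewicz class provided by (\ref{ineq3'}), standard Riesz potential estimates force $\phi_{u_\phi}$ into a fixed H\"older class, hence into a precompact subset of $C(\overline{\Omega})$. Schauder's fixed point theorem then yields $\phi^{*}$ with $\mathcal{F}(\phi^{*})=\phi^{*}$, and the pair $(u_{\phi^{*}},\phi^{*})$ is a very weak solution of (\ref{ineq1}) for which (\ref{ineq3'}) holds by construction. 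For uniqueness, I would take two solutions $(u_{1},\phi_{1})$ and $(u_{2},\phi_{2})$, test the difference of the two scalar equations against a suitable non-negative $\xi\in X_{s}$ as in \cite{veron}, and exploit monotonicity of $r\mapsto w(x)|r|^{k-1}r$ together with linearity and positivity in problem B to control the cross term $l\phi u$; uniqueness of $\phi$ then follows from the linearity of problem B once $u$ is pinned down.

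The main obstacle lies in the compactness/continuity step. The very weak framework rules out energy estimates, so compactness has to be extracted purely from the Green/Riesz representation of $\phi_u$ in tandem with the Marcinkiewicz-type control on $u_\phi$ coming from Chen--V\'eron, and the continuous dependence of $u_\phi$ on the frozen potential must be obtained in a form that is uniform in $\phi\in K$. Carrying this out rigorously in the Marcinkiewicz and Green-potential setting, and then sharpening the estimates into a genuine comparison argument for uniqueness of the coupled pair, is where the bulk of the technical work should concentrate.
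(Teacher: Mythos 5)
Your route is genuinely different from the paper's. The paper does not iterate on $\phi$ at all: it substitutes the Green representation (\ref{ineq4}) directly into problem A, obtaining the single scalar equation (\ref{ineq5}) with the \emph{nonlocal} nonlinearity $F[u]=l(x)\phi_u(x)u$, and then re-runs the Chen--V\'eron machinery for that nonlinearity from scratch -- a variational/maximal-monotone (Browder--Minty) argument for $L^2$ data, an $L^1$ contraction estimate to reach $f\in L^1(\Omega,\rho^s dx)$, and finally approximation of $\mu$ by smooth $\mu_n$ with uniform Marcinkiewicz bounds, uniform integrability of $g(u_n)$ and Vitali's theorem. Your decoupling has the attraction of invoking Theorem 1.1 of \cite{veron} as a black box for each frozen, genuinely local and monotone nonlinearity $g_\phi$, at the price of having to establish continuity and compactness of the map $\mathcal{F}$ -- and that is exactly where the proposal breaks down.

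The concrete gap is the compactness step. The only bound on $u_\phi$ that is uniform in $\phi$ is $|u_\phi|\le \mathbb{G}[|\mu|]$ together with the Marcinkiewicz estimate $\mathbb{G}[|\mu|]\in M^{k_{s,\beta}}(\Omega,\rho^{\beta}dx)$ with $k_{s,\beta}\le \tfrac{N}{N-2s}$. Hence $l\,u_\phi^{2}$ is controlled only in $M^{k_{s,\beta}/2}$. Since $\tfrac{N}{N-2s}<2$ whenever $N>4s$, for every $N\ge 4$ (and for $N=2,3$ unless $s$ is close to $1$) you do not even obtain $u_\phi^{2}\in L^{1}(\Omega)$ uniformly, let alone a datum to which Riesz-potential estimates give H\"older continuity of $\phi_{u_\phi}$: mapping into $C^{0,\alpha}$ would require $l\,u_\phi^{2}\in L^{p}$ with $p>\tfrac{N}{2s}$, far beyond what (\ref{ineq3'}) supplies. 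So the invariant set $K$ cannot be taken precompact in $C(\overline{\Omega})$; and once $\phi\in K$ may be unbounded, the coefficient $l(x)\phi(x)$ of the frozen linear term is unbounded, which undercuts your verification of the hypotheses of Theorem 1.1 of \cite{veron} for $g_\phi$ -- the argument becomes circular. A repair would have to run Schauder in an $L^{q}$ topology using the compactness of $\mathbb{G}$ from $L^{1}(\Omega,\rho^{\beta}dx)$ into $L^{q}(\Omega)$ for $q<\tfrac{N}{N+\beta-2s}$, after first settling the integrability of $l\,u^{2}$. The uniqueness sketch also needs more than monotonicity: the difference $l\phi_{u_1}u_1-l\phi_{u_2}u_2$ has no sign against $\mathrm{sign}(u_1-u_2)$ when $\phi_{u_1}\neq\phi_{u_2}$, so the cross term is not controlled by the comparison argument you describe.
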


\begin{theorem}
$s$, $\Omega$, $k$ are as in problem ($\ref{ineq1}$). Then the
problem ($\ref{ineq2}$) has a solution with a nonnegative bounded
measure $\mu$ iff $\mu$ satisfies on compact subsets of $\Omega$
$\text{Cap}_{s,k'}(K)=0\Rightarrow\mu(K)=0$. Here
$\text{Cap}_{s,k'}(K)=\inf\{||\phi||_{W^{2s,k'}(\Omega)}^{k'}:\phi\in
C_{c}^{\infty}(\Omega), 0\leq \phi\leq 1, \phi\equiv
1~\text{on}~K\}$.
\end{theorem}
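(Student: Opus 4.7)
The plan is to exploit the reduction of the system (\ref{ineq1}) to the scalar equation (\ref{ineq5}) already carried out in Section 3 and then adapt the Baras--Pierre style characterisation used in \cite{veron}. Because $F[u]=l(x)\phi_u(x)u$ with $\phi_u$ given by (\ref{ineq4}) is order preserving and, under the bound (\ref{ineq3'}), produces a term in $L^{1}(\Omega,\rho^{s}dx)$, the effective nonlinearity $g(u)=F[u]+w(x)|u|^{k-1}u$ satisfies the hypotheses of Theorem~3.1. Hence throughout the argument it suffices to work with the scalar very weak formulation and treat $\phi$ a posteriori via (\ref{ineq4}).

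For the necessity direction, suppose a very weak solution $(u,\phi)$ exists with $\mu\ge 0$ bounded, and let $K\subset\Omega$ be compact with $\mathrm{Cap}_{s,k'}(K)=0$. By definition of capacity there exist $\psi_n\in C_c^{\infty}(\Omega)$ with $0\le\psi_n\le 1$, $\psi_n\equiv 1$ on $K$, and $\|\psi_n\|_{W^{2s,k'}(\Omega)}\to 0$. Testing the very weak formulation against $\psi_n$ and using H\"older's inequality with the conjugate pair $(k,k')$ together with the Marcinkiewicz estimate $u\in M^{N/(N-2s)}(\Omega)$ (valid since $k\le k_{s,\beta}$ and $\mu$ is a bounded measure) one obtains
\begin{equation*}
\mu(K)\le\int_{\Omega}\psi_n\,d\mu
= \int_{\Omega}u(-\Delta)^{s}\psi_n\,dx + \int_{\Omega}(l\phi_u u + w|u|^{k-1}u)\psi_n\,dx
\le C\|\psi_n\|_{W^{2s,k'}(\Omega)},
\end{equation*}
where the constant $C$ depends on $\|u\|_{L^{k}}$, $\|w|u|^{k-1}u\|_{L^{k'}}$ and $\|l\phi_u u\|_{L^{k'}}$ (the last being finite since $\phi_u$ is bounded on compacta by the Riesz potential representation). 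Letting $n\to\infty$ gives $\mu(K)=0$.

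For sufficiency, approximate $\mu$ monotonically from below by $\mu_n\in L^{\infty}(\Omega)\cap\mathfrak{m}(\Omega,\rho^{\beta})$ with $\mu_n\ge 0$ and $\mu_n\uparrow\mu$. For each $n$, Theorem~3.1 supplies a unique very weak solution $(u_n,\phi_n)$. The two-sided bound $-\mathbb{G}[(\mu_n)_-]\le u_n\le\mathbb{G}[(\mu_n)_+]$ together with the standard Marcinkiewicz estimates gives relative compactness in $L^{1}(\Omega)$, so along a subsequence $u_n\to u$ a.e.\ in $\Omega$; by dominated convergence $\phi_{u_n}\to\phi_u$ pointwise and the representation (\ref{ineq4}) identifies the limiting $\phi$. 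The capacity condition is then used to upgrade a.e.\ convergence of the nonlinear terms to $L^{1}$ convergence: any concentration defect of $w|u_n|^{k-1}u_n$ would be carried by a set of zero $\mathrm{Cap}_{s,k'}$, hence by hypothesis of zero $\mu$-measure, and a Brezis--Browder/Vitali truncation argument rules such concentration out. Passing to the limit in the very weak formulation yields the desired solution.

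The main obstacle is the sufficiency direction, specifically the $L^{1}$ passage to the limit in $w|u_n|^{k-1}u_n$: this is exactly the step that forces the capacity hypothesis into the proof. The nonlocal Hartree-type term $l\phi_{u_n}u_n$ is comparatively benign because the Riesz potential smoothing of $lu_n^{2}$ produces an equi-continuous family, so once the power nonlinearity is under control the coupling does not create new difficulties. Formalising the equi-integrability via capacity, in the spirit of Baras--Pierre in the local setting and \cite{veron} in the nonlocal scalar setting, is the technical core of the argument.
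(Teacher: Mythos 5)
Your necessity argument follows essentially the same Baras--Pierre route as the paper: test the very weak formulation against cut-offs adapted to $K$, apply H\"older with the conjugate pair $(k,k')$, and let the capacity tend to zero. Two points of care. First, the paper tests with $\xi=\phi^{k'}$, not with $\phi$ itself; this is what makes the H\"older step close up, producing $\left(\int_\Omega\phi^{k'}u^k\,dx\right)^{1/k}\|(-\Delta)^s\phi\|_{L^{k'}}$ with the same weight $\phi^{k'}$ that multiplies the absorption term on the left. Second, the zeroth-order terms $\int_\Omega\phi_n^{k'}\bigl(u+F[u]+w|u|^{k-1}u\bigr)\,dx$ are \emph{not} bounded by $C\|\phi_n\|_{W^{2s,k'}(\Omega)}$ as you assert (that would require integrability of $u$ beyond what the very weak formulation provides); in the paper they vanish by dominated convergence, because $\phi_n\to0$ a.e.\ while $0\le\phi_n\le1$ and $u$, $F[u]$, $w|u|^{k}$ are integrable. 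Both points are repairable.

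The sufficiency direction has a genuine gap. You propose to approximate $\mu$ monotonically from below by $\mu_n\in L^{\infty}(\Omega)$ with $\mu_n\uparrow\mu$; this is impossible whenever $\mu$ has a nontrivial singular part: if $f_n\uparrow f$ pointwise and $f_n\,dx\rightharpoonup\mu$, monotone convergence forces $\mu=f\,dx$ to be absolutely continuous with respect to Lebesgue measure, whereas the whole point of the theorem is to admit measures singular with respect to Lebesgue but diffuse with respect to $\mathrm{Cap}_{2s,k'}$. Moreover, the mechanism by which the capacity hypothesis actually enters is deferred in your write-up to an unproved heuristic (``any concentration defect is carried by a capacity-null set''). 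In the paper, and in \cite{veron}, that mechanism is the decomposition theorem of Feyel and de la Pradelle \cite{feyel}: a nonnegative bounded measure charging no compact set of zero $\mathrm{Cap}_{2s,k'}$-capacity is the increasing weak limit of measures $\mu_n\in W^{-2s,k}(\Omega)\cap\mathfrak{m}_{+}^{b}(\Omega)$. The paper then needs an intermediate solvability step for each such $\mu_n$: truncate the nonlinearity to $g(T_m(u))$, use monotonicity of $g\circ T_m$ and the comparison principle to obtain a monotone sequence of solutions, and pass to the limit using the domination $0\le u_m\le\mathbb{G}[\mu_n]\in L^{k}(\Omega)$ --- the membership $\mathbb{G}[\mu_n]\in L^k(\Omega)$ being exactly where $\mu_n\in W^{-2s,k}(\Omega)$ is used to control the power nonlinearity by dominated convergence. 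Without the Feyel--de la Pradelle decomposition and this intermediate step, your argument never invokes the capacity hypothesis in a rigorous way, and the $L^1$ passage to the limit in $w|u_n|^{k-1}u_n$ remains unjustified.
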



\section{Existence and uniqueness}
In order to prove the Theorem 3.1, we first state and prove the following lemma.
\begin{lemma}
Let $\Omega$ be a bounded domain in $\mathbb{R}^N$ with sufficiently
smooth boundary and $g:\mathbb{R}\rightarrow\mathbb{R}$ is
continuous and non decreasing with $rg(r)\geq 0$ $\forall
r\in\mathbb{R}$. Then for any $f\in L^1(\Omega,\rho^{s}dx)$, there
exists a unique very weak solution to $(\ref{ineq1})$.
\end{lemma}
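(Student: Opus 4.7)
The natural strategy is a standard approximation--compactness scheme, starting from the two propositions already recalled from \cite{veron}. First, approximate the datum $f\in L^{1}(\Omega,\rho^{s}dx)$ by a sequence $f_n\in C^{\gamma}(\overline{\Omega})$ with $f_n\to f$ in $L^{1}(\Omega,\rho^{s}dx)$; truncation and regularisation (for example $f_n = \eta_n \ast T_n(f)$, where $T_n$ is the usual truncation at height $n$ and $\eta_n$ is a standard mollifier) supplies such a sequence. For each $n$, the first proposition from \cite{veron} applied to the auxiliary nonlinear problem yields a very weak solution $u_n\in X_s$ of
\begin{equation*}
(-\Delta)^{s}u_n + g(u_n)=f_n\ \text{in}\ \Omega,\qquad u_n=0\ \text{in}\ \mathbb{R}^N\setminus\Omega.
\end{equation*}
The existence of such $u_n$ for Hölder data follows from the first proposition once one observes that $g(r)r\geq 0$ prevents blow-up in the nonlinear term, so a Schauder or monotone-iteration argument applies at the smooth level.

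Next I would derive the two a priori bounds that drive the passage to the limit. Testing the equation with a function $\xi_0\in X_s$ chosen so that $(-\Delta)^{s}\xi_0=1$ on $\Omega$ (obtained again from the first proposition) and using the sign inequalities of Proposition~2.2 of the excerpt give, after splitting $f_n=f_n^+-f_n^-$ and $u_n=u_n^+-u_n^-$,
\begin{equation*}
\int_{\Omega}|u_n|\,dx + \int_{\Omega}|g(u_n)|\,\xi_0\,dx \;\leq\; C\int_{\Omega}|f_n|\,\rho^{s}dx,
\end{equation*}
where $\xi_0\asymp\rho^{s}$ near $\partial\Omega$. Hence $(u_n)$ is bounded in $L^{1}(\Omega)$ and $(g(u_n))$ is bounded in $L^{1}(\Omega,\rho^{s}dx)$, uniformly in $n$. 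The monotonicity of $g$ together with the sign condition is essential here: it guarantees absorption of the nonlinearity rather than an uncontrolled contribution.

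The main obstacle, as usual in $L^{1}$ elliptic theory with nonlinear lower-order terms, is the \emph{strong} convergence of $g(u_n)$ needed to pass to the limit. My plan is to extract a subsequence with $u_n\to u$ a.e.\ (via compactness of the Green operator $\mathbb{G}$ from $L^{1}(\Omega,\rho^{s}dx)$ into $L^{1}(\Omega)$, combined with the bound $-\mathbb{G}[f_n^{-}]\leq u_n\leq \mathbb{G}[f_n^{+}]$ that comes from Theorem~1.1 of \cite{veron}) and then to upgrade $g(u_n)\to g(u)$ in $L^{1}(\Omega,\rho^{s}dx)$ by Vitali's theorem. Equi-integrability with weight $\rho^{s}$ is established by the standard truncation split: on $\{|u_n|\leq K\}$ the continuity of $g$ and a.e.\ convergence suffice, while on $\{|u_n|>K\}$ the bound $\int|g(u_n)|\xi_0\,dx\leq C$ combined with monotonicity gives $\int_{\{|u_n|>K\}}|g(u_n)|\rho^{s}dx\to 0$ uniformly as $K\to\infty$. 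Having obtained both convergences, I can pass to the limit in the integral identity~\eqref{ineq2'} against any $\xi\in X_s$, using the uniform domination $|(-\Delta)^{s}\xi|\leq C$ and the $L^{1}$-convergence of $u_n$ and $g(u_n)$.

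Finally, for uniqueness, if $u_1,u_2$ are two very weak solutions with the same datum $f$, then $u_1-u_2$ solves $(-\Delta)^{s}(u_1-u_2) = -(g(u_1)-g(u_2))$ in the very weak sense. Applying the Kato-type inequality of Proposition~2.2 (the inequality with $\operatorname{sign}(u)$) gives
\begin{equation*}
\int_{\Omega}|u_1-u_2|(-\Delta)^{s}\xi\,dx \;\leq\; -\int_{\Omega}\xi\,\operatorname{sign}(u_1-u_2)\bigl(g(u_1)-g(u_2)\bigr)dx\;\leq\;0
\end{equation*}
for every nonnegative $\xi\in X_s$, since $g$ is nondecreasing. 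Choosing $\xi=\xi_0$ with $(-\Delta)^{s}\xi_0=1$ forces $u_1=u_2$ a.e., completing the proof.
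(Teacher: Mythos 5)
Your overall architecture (approximate the datum, solve the approximate problems, extract an a.e.\ limit, upgrade via Vitali, pass to the limit in the weak formulation) is a legitimate strategy, but it is not the one the paper uses, and as written it has one genuine gap at its crux: the equi-integrability of $(g(u_n))$. You assert that on $\{|u_n|>K\}$ the bound $\int|g(u_n)|\xi_0\,dx\leq C$ ``combined with monotonicity'' gives $\int_{\{|u_n|>K\}}|g(u_n)|\rho^{s}dx\to 0$ uniformly as $K\to\infty$. A uniform $L^1(\Omega,\rho^s dx)$ bound on a sequence never by itself yields uniformly small tails---that is exactly the difference between boundedness and uniform integrability in $L^1$, and it is the whole difficulty of $L^1$ elliptic theory. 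Monotonicity of $g$ gives you, via Chebyshev, that the weighted measure of $\{|u_n|>K\}$ is small uniformly in $n$, but not that the integral of $|g(u_n)|$ over that set is small. To close this you need an extra ingredient: either a growth restriction of the type $\int_1^\infty(g(t)-g(-t))t^{-1-k_{s,\beta}}dt<\infty$ together with a Marcinkiewicz estimate (this is precisely the argument the paper runs later, in the measure-data part of the proof, with the quantities $\omega(\lambda)$ and $\tilde g$), or the refined Kato-type inequality $\int_{\{|u_n|>K\}}|g(u_n)|\rho^s dx\leq\int_{\{|u_n|>K\}}|f_n|\rho^s dx$ together with the equi-integrability of $(f_n)$, which does hold because $f_n\to f$ strongly. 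Neither is in your write-up, and the lemma as stated assumes no growth on $g$, so the second option is the one you would have to supply and prove in the very-weak framework.

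The paper's own proof avoids Vitali entirely here, and the comparison is instructive. It first solves the problem for $f\in L^2(\Omega)$ by a variational/monotone-operator argument (coercivity of the functional $I$ plus Browder--Minty applied to the subdifferential $\partial I$; this is also how the nonlocal term $F[u]$, which is not a pointwise nondecreasing $g(r)$, gets absorbed), and then passes from $L^2$ to $L^1(\Omega,\rho^s dx)$ data using the contraction estimate $\|u_1-u_2\|_{L^{1}(\Omega)}+\|g(x,u_1)-g(x,u_2)\|_{L^1(\Omega,\rho^{s}dx)}\leq \|f_1-f_2\|_{L^1(\Omega)}$. With $f_n=\mathrm{sign}(f)\min\{|f|,n\}$ Cauchy, this makes $(u_n)$ and $(g(u_n))$ Cauchy in $L^1(\Omega)$ and $L^1(\Omega,\rho^s dx)$ respectively, so strong convergence of the nonlinear term comes for free and no equi-integrability argument is needed; the same estimate also gives uniqueness in one line, in place of your (correct) Kato-inequality argument. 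If you want to keep your approximation-by-smooth-data route, import that contraction estimate: it is the cleanest way to fill the hole.
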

\begin{proof}
We use a variational technique to guarantee an existence to a
solution to the problem in $(\ref{ineq1})$. To attempt this we define $I:W_c^{2,s}(\Omega)\rightarrow \mathbb{R}$ the functional as follows.
\begin{equation}
I(u)=\frac{1}{2}\int_{\Omega}((-\Delta)^su)^2dx+\int_{\Omega}H(x,u)dx
+\Phi[u],\label{ineq6}
\end{equation}
where $\Phi:W_c^{2,s}(\Omega)\rightarrow\mathbb{R}$,
$W_c^{2,s}(\Omega)=\{\underline{w}\in
L^2(\Omega):\int_{\mathbb{R}^N}|\hat{\underline{w}}|^2(1+|x|^{s})dx<\infty\}$
with $\underline{w}$ is the extension of $w\in L^2(\Omega)$ by $0$,
$\Phi[u]=c(N,s)\int_{\Omega}\int_{\Omega}\frac{l(x)l(y)(u(x))^2(u(y))^2}{|x-y|^{N+2s}}dx$,
$H(x,u)$ is the primitive of $h(x,u)=w(x)|u|^{k-1}u$.\\
The functional $I$ is coercive over $W_c^{2,s}(\Omega)$ because
$\frac{1}{2}\int_{\Omega}((-\Delta)^su)^2dx+\int_{\Omega}H(x,u)dx$
is coercive and the coercivity of $\Phi[u]$ can be guaranteed by the fibre maps. Further, the
subdifferential $\partial I$ of the map $I$ is maximal-monotone in
the sense of Browder-Minty (refer \cite{minty} and the references
therein). This can be guaranteed by noting that $\Phi[u]$ is continuous and hence
hemicontinuous. Further, $\Phi[u]$ coercive. Hence by \textit{Browder-Minty} \cite{MB} the range of
$\partial I$ is $L^2(\Omega)$. So for $f\in L^2(\Omega)$ there
exists $u\in W_c^{s,2}(\Omega)$ in the domain of $\partial I$, the
subdifferential of $I$, such that $\partial I(u)=f$, i.e.,
$(-\Delta)^su+l(x)\phi_u(x)u+w(x)|u|^{k-1}u=f$.\\
If $f\in L^1(\Omega,\rho^{s}dx)$, we define
$f_n=\text{sign}(f)\min\{|f|,n\}\in L^2(\Omega)$. We denote the
corresponding solution as $u_n$. Thus we have
\begin{align}
(-\Delta)^su_n+g(x,u_n)&=f_n~\text{in}~\Omega\nonumber\\
u_n&=0~\text{in}~\mathbb{R}^N\setminus\Omega.\nonumber
\end{align}
By virtue of the fact that the PDE $(-\Delta)^s u=1$ with a
homogeneous, Dirichlet boundary condition has a solution, say $u_0$,
we have an estimate (please refer to the appendix in \cite{veron})
\begin{align}\label{estimate1}
||u_1-u_2||_{L^{1}(\Omega)}+||g(x,u_1)-g(x,u_2)||_{L^1(\Omega,\rho^{s}dx)}&\leq
||f_1-f_2||_{L^1(\Omega)},
\end{align}
we see that $(u_n)$, $(g(u_n))$ are
Cauchy sequences in $L^1(\Omega)$, $L^1(\Omega,\rho^sdx)$
respectively. So $u_n \rightarrow u$, $g(u_n) \rightarrow v$ in
$L^1(\Omega)$, $L^1(\Omega,\rho^sdx)$. Therefore there exists a
subsequence, which we still denote as $u_n$, converges to $u$ a.e.
in $\Omega$ and hence $g(u_n)\rightarrow g(u)$ a.e. in $\Omega$. So
$u$ is a very weak solution to the PDE in ($\ref{ineq1}$) with $f\in
L^1(\Omega,\rho^sdx)$. Uniqueness follows from the estimate in (\ref{estimate1}). We now
state an auxiliary lemma as in \cite{veron}.
\begin{lemma}
If $g:[0,\infty)\rightarrow [0,\infty)$ with the assumptions on $g$
as before and $k_{s,\beta}>1$ then
$\underset{t\to\infty}{\lim} g(t)t^{-k_{s,\beta}}=0$.
\end{lemma}
\noindent Continuing with the proof of the Theorem 1.1, suppose
$\mu$ is a Radon measure. Let $C_{\beta}(\overline{\Omega})=\{\zeta\in
C(\overline{\Omega}):\rho^{-\beta}\zeta\in C(\overline{\Omega})\}$
with the norm as
$||\zeta||_{C_{\beta}(\overline{\Omega})}=||\rho^{-\beta}\zeta||_{C(\overline{\Omega})}$.
Consider a sequence of measure $(\mu_n)\subset
C^{1}(\overline{\Omega})$ such that
$\int_{\overline{\Omega}}\zeta\mu_n
dx\rightarrow\int_{\overline{\Omega}}\zeta d\mu$ as
$n\rightarrow\infty$ for each $\zeta\in
C_{\beta}(\overline{\Omega})$. One can conclude from the notion of
convergence of the measures that
$||\mu_n||_{L^1(\Omega,\rho^{\beta}dx)}\leq
c^{\ast}||\mu||_{\mathfrak{m}(\Omega,\rho^{\beta})}$. Then,
\begin{equation}
\int_{\overline{\Omega}}(|u_{n}|+|g(u_{n})|\eta_{1})dx\leq
c'(\int_{\Omega}|\mu_{n}|\rho^{s}dx)\leq
c''||\mu_{n}||_{L^{1}(\Omega,\rho^{\beta}dx)}\leq
c'''||\mu||_{\mathfrak{m}(\Omega,\rho^{\beta})}.\label{ineq7}
\end{equation}
Note that here all constants are positive. $\eta_1$ is a solution to $(-\Delta)^su=1$ with homogeneous, Dirichlet bondary condition that satisfy $c^{\prime{-1}}\leq
\frac{\eta_1}{\rho^{s}}\leq c'$ in $\Omega$ by \cite{serra}. Thus,
$||g(u_n)||_{L^1(\Omega,\rho^s dx)}\leq
c_0||\mu||_{\mathfrak{m}(\Omega,\rho^{\beta})}$ .\\
For $\epsilon > 0$, define
$\xi_{\epsilon}=(\eta_1+\epsilon)^{\frac{\beta}{s}}-\epsilon^{\frac{\beta}{s}}$.
Then by the following lemma given in \cite{veron}
\begin{lemma}
Assume that $u\in X^{s}$ and $\gamma$ is $C^2$ in the interval $u(\overline{\Omega})$ satisfying $\gamma(0)=0$. Then $\gamma\circ u\in X^s$ and for all $x\in\Omega$, there exists $z_{x} \in \overline{\Omega}$ such that
$$(-\Delta)^{s}(\gamma\circ u)(x)=(\gamma^{\prime}\circ u)(x)(-\Delta)^{s}u(x)-\frac{\gamma^{"}\circ u(z_{x})}{2}\int_{\Omega}\frac{(u(x)-u(y))^2}{|y-x|^{N+2s}}dy$$.
\end{lemma}
\noindent we have,
\begin{equation}
\int_{\Omega}(|u_n|\rho^{\beta-s}+|g(u_n)|\rho^{\beta})dx\leq
d_0||\mu_n||_{L^1(\Omega,\rho^{\beta}dx)}\leq
d_{1}||\mu||_{\mathfrak{m}(\Omega,\rho^{\beta})}.\label{ineq8}
\end{equation}
where $d_{0},d_{1} > 0$. This shows that $||g(u_n)||_{L^{1}(\Omega,\rho^{\beta}dx)}\leq
d_2||\mu||_{\mathfrak{m}(\Omega,\rho^{\beta})}$ where $d_{2}$ is positive and independent of $n$. Since,
$u_n=\mathbb{G}[\mu_n-g(u_n)]$ and $(\mu_n-g(u_n))$ is uniformly
bounded in $L^1(\Omega,\rho^{\beta}dx)$, we have
\begin{equation}
||u_n||_{M^{k_{s,\beta}}(\Omega,\rho^s dx)}\leq
||\mu_n-g(u_n)||_{L^1(\Omega,\rho^{\beta}dx)}\leq
c_1||\mu||_{\mathfrak{m}(\Omega,\rho^{\beta})},\label{ineq9}
\end{equation}
where $c_{1} > 0$ and $M^{k_{s,\beta}}$ is the Marcinkiewicz space. Now, by
proposition 2.6 in \cite{veron} which says that the map $f \mapsto
\mathbb{G}[f]$ is compact from $L^1(\Omega,\rho^{\beta}dx)$ into
$L^q(\Omega)$ for any $q\in [1,\frac{N}{N+\beta-2s})$, we have
$(u_n)$ has a strongly convergent subsequence in $L^q(\Omega)$.
Therefore, there exists a subsequence, which we still denote as
$(u_n)$ in $L^1(\Omega)\bigcap L^q(\Omega)$ which converges to $u$ in $L^{q}(\Omega)$. Thus
$g(u_n)\rightarrow g(u)$ a.e. in $\Omega$.\\
We will now see that the sequence $(g(u_n))$ is uniformly
integrable. Define $\tilde{g}(r)=g(|r|)-g(-|r|)$. We see that
$|g(r)|\leq \tilde{g}(|r|)$ for all $r\in\mathbb{R}$. We note that
the operator $\Phi$ being even contributes nothing to $\tilde{g}$.
For each $\lambda>0$ define
$E_{\lambda}=\{x\in\Omega:|u_{n}(x)|>\lambda\}$ and
$\omega(\lambda)=\int_{E_{\lambda}}\rho^{s}dx$. Then for any Borel
set $A$ of $\Omega$ consider,
\begin{align}
\int_{A}|g(u_n)|\rho^{s}dx& =  \int_{A\bigcap
E_{\lambda}^c}|g(u_n)|\rho^{s}dx+\int_{E_{\lambda}}|g(u_n)|\rho^{s}dx\nonumber\\
&\leq 
\tilde{g}(\lambda)\int_{A}\rho^{s}dx+\omega(\lambda)\tilde{g}(\lambda)+\int_{\lambda}^{\infty}\omega(t)d\tilde{g}(t).\label{ineq10}
\end{align}
Further, from the following Proposition in \cite{veron}
\begin{proposition}
Assume that $1\leq q < k < \infty$ and $u\in L^{1}_{loc}(\Omega,d\mu)$. Then there exists $C(q,k) > 0$ such that $$\int_{E}|u|^{q}d\mu \leq C(q,k)||u||_{M^{k}(\Omega,d\mu)}\left(\int_{E}d\mu\right)^{1-\frac{q}{k}}$$ for any Borel set E of $\Omega$.
\end{proposition}
\noindent we have,
\begin{align}
\omega(\lambda)\tilde{g}(\lambda)+\int_{\lambda}^{T}\omega(t)d\tilde{g}(t)&\leq
d_2\tilde{g}(\lambda)\lambda^{-k_{s,\beta}}+d_2\int_{\lambda}^{T}t^{-k_{s,\beta}}d\tilde{g}(t)\nonumber\\
&\leq
d_2T^{-k_{s,\beta}}\tilde{g}(T)+\frac{d_2}{k_{s,\beta}+1}\int_{\lambda}^{T}t^{-1-k_{s,\beta}}\tilde{g}(t)ds.\nonumber
\end{align}
By the Lemma 1.2, we have
\begin{equation}
\omega(\lambda)\tilde{g}(\lambda)+\int_{\lambda}^{\infty}\omega(t)d\tilde{g}(t)\leq\frac{d_2}{k_{s,\beta}+1}
\int_{\lambda}^{\infty}t^{-1-k_{s,\beta}}\tilde{g}(t)ds.\nonumber
\end{equation}
The second term on the right hand side goes to $0$ as
$\lambda\rightarrow\infty$. Hence, for any $\epsilon>0$
$\exists\;\lambda_0$ such that $\frac{d_2}{k_{s,\beta}+1}
\int_{\lambda}^{\infty}t^{-1-k_{s,\beta}}\tilde{g}(t)ds<\epsilon$
$\forall\lambda\geq\lambda_0$. Hence for a fixed
$\lambda\geq\lambda_0$ and from equation ($\ref{ineq10}$) we obtain
$\delta>0$ such that $\int_{A}\rho^sdx\leq\delta$ implies
$\tilde{g}(\lambda)\int_{A}\rho^sdx<\epsilon$.\\
Thus we have for any $\epsilon>0$ $\exists\;\delta >0$ such that
$\int_{A}|g(u_n)|\rho^{s}dx<2\epsilon$ for any Borel set $A$ whose
measure is less than $\delta$ implying that $(g(u_n))$ is uniformly
integrable. In addition we also have that $g(u_n)\rightarrow g(u)$
a.e. in $\Omega$. Hence, by the Vitali convergence theorem we have
$g(u_n)\rightarrow g(u)$ in $L^1(\Omega,\rho^{s}dx)$. In fact, the
result holds for any $0\leq\beta\leq s$. Thus passing the limit
$n\rightarrow\infty$ to
\begin{equation}
\int_{\Omega}(u_n(-\Delta)^s\xi+\xi
g(u_n))dx=\int_{\Omega}\xi\mu_ndx\nonumber
\end{equation}
we obtain
\begin{equation}
\int_{\Omega}(u(-\Delta)^s\xi+\xi g(u))dx=\int_{\Omega}\xi
d\mu.\nonumber
\end{equation}
Thus $u$ is a weak solution to the scalar equation (3.6) and
uniqueness follows for the estimate. So what we have proved is that
the Schr\"{o}dinger-Poisson system of equation in (3.1) has a unique
solution corresponding to a Radon measure in
$\mathfrak{m}(\Omega,\rho^{\beta})$.
\end{proof}
\section{A necessary and sufficient condition}
We now prove Theorem 2.2 which is the necessary and sufficient
condition for the existence of a solution to the problem.
\begin{proof}
{\it Necessary condition}~ Suppose $u$ is a very weak solution of
($\ref{ineq1}$) and let $K$ be a compact subset of $\Omega$. Let
$\phi\in C_c^{\infty}(\mathbb{R}^N)$ such that $0\leq\phi\leq 1$ and
$\phi(x)=1$ over $K$. Set $\xi=\phi^{k'}\in X_{s}$ and
\begin{equation}
\int_{\Omega}(u(-\Delta)^s\xi+g(u)\xi)dx=\int_{\Omega}\xi
d\mu.\label{ineq11}
\end{equation}
Clearly, $\xi\geq\chi_{K}$. It follows from Lemma 4.3,
\begin{equation}
\int_{\Omega}(k'\phi^{k'-1}u(-\Delta)^{s}\phi+\phi^{k'}g(u))dx\geq\mu(K).\label{ineq12}
\end{equation}
By the H\"{o}lder's inequality we have
\begin{equation}
|\int_{K}\phi^{k'-1}u(-\Delta)^{s}\phi dx|\leq
\left(\int_{\Omega}\phi^{k'}u^k
dx\right)^{1/k}\left(\int_{\Omega}|(-\Delta)^s\phi|^{k'}\right)^{1/k'}.\label{ineq13}
\end{equation}
By the equivalence of the norms of $W^{2s,k'}(\Omega)$ and
$W^{2s,k'}(\mathbb{R}^N)$ we further have
\begin{equation}
|\int_{K}\phi^{k'-1}u(-\Delta)^{s}\phi dx|\leq
d_4\left(\int_{\Omega}\phi^{k'}u^k
dx\right)^{1/k}||\phi||_{W^{2s,k'}(\Omega)}.\label{ineq14}
\end{equation}
where $d_{4} > 0$. From ($\ref{ineq12}$) we have 
$$\mu(K)\leq \int_{\Omega}\phi^{k'}u^{k}w(x)dx+\int_{\Omega}\phi^{k'}u\;dx+\int_{\Omega}F[u]\phi^{k'}dx+d_4\left(\int_{\Omega}\phi^{k'}u^k
dx\right)^{1/k}||\phi||_{W^{2s,k'}(\Omega)}.$$ Let
$\text{Cap}_{s,k'}(K)=0$. Then by the definition of the capacity we
have a sequence of functions $\phi_n$ such that $0\leq \phi_n \leq
1$, $\phi_n\equiv 1$ and $||\phi_n||_{W^{2s,k'}(\Omega)}\rightarrow
0$. The Lebesgue measure of $K$ is zero since $\phi_n\equiv 1$ on
$K$ and $\phi_n\rightarrow 0$ a.e. Thus using these observations in
($\ref{ineq14}$) and passing the limit $n\rightarrow\infty$ we get
$\mu(K)=0$.\\
{\it Sufficient condition}~We begin by defining the truncation
$T_n(r)=\min\{n,|r|\}\text{sign}(r)$ and by assuming $\mu\in
W^{-2s,k}(\Omega)\bigcap\mathfrak{m}_{+}^b(\Omega)$. So, for each
$n\in\mathbb{N}$, we have
\begin{align}
(-\Delta)^su+g(T_n(u))&=\mu~\text{in}~\Omega\nonumber\\
u&= 0~\text{in}~\mathbb{R}^N\setminus\Omega,
\end{align}
where $g(T_n(u))=T_{n}(u)+F({T_{n}(u)}(x))+|T_n(u)|^{k-1}T_n(u)$. By
Theorem (1.1) in \cite{veron} there exists a non negative solution for each $n$.
Observe that $T_{n+1}(u)\geq T_{n}(u)$ and hence
$(T_{n+1}(u))^{k}\geq (T_{n+1}(u))^{k}$, $F(T_{n+1}(u)(x)) \geq
F((T_{n}(u)(x))$ to get $g(T_{n+1}(u)) \geq g(T_{n}(u))$. Further,
\begin{align}
(-\Delta)^su_n+g(T_{n+1}(u_n))&=\mu+g(T_{n+1}(u_n))-g(T_{n}(u_n))\nonumber\\
& \geq  \mu=(-\Delta)^su_{n+1}+g(T_{n+1}(u_{n+1})).\nonumber
\end{align}
So, by the comparison of solutions we have $u_n \geq u_{n+1}$. Set,
$\underset{n\rightarrow\infty}{\lim}u_n(x)=u(x)$. Therefore by the
Egoroff's theorem $u_n\rightarrow u$ a.e. in $\Omega$ and thus
$u_n\rightarrow u$ in $L^1(\Omega)$. Since $\mu\in
W^{-2s,k}(\Omega)$ this says that $\mathbb{G}[\mu]\in L^k(\Omega)$.
Since $0\leq u_n\leq \mathbb{G}[\mu]$ so $|T(u_n)|^k\leq
(\mathbb{G}[\mu])^k$ because $T_n(u_n)\rightarrow u$ a.e. in
$\Omega$. Hence by the dominated convergence theorem we have
$\underset{n\rightarrow\infty}{\lim}\int_{\Omega}T_n(u_n)^k=\int_{\Omega}u^k$.
Thus passing the limit $n\rightarrow\infty$ to
\begin{equation}
\int_{\Omega}(u_n(-\Delta)^s\xi+g(T_n(u))\xi)dx=\int_{\Omega}\xi
d\mu\nonumber
\end{equation}
for each $\xi\in X_s$. So we conclude that $u$ is a very weak unique solution to the problem (3.3) for
$\mu\in W^{-2s,k}(\Omega)\bigcap\mathfrak{m}_{+}^{b}(\Omega)$.\\
Now let $\mu$ be such that whenever for $K\subset\Omega$ compact
$\text{Cap}_{2s,k'}(K)=0\Rightarrow\mu(K)=0$. Then by the result due
to Feyel and de la Pradelle \cite{feyel} there exists an increasing
sequence of measures say $(\mu_n)\subset
W^{-2s,k}(\Omega)\bigcap\mathfrak{m}_{+}^b(\Omega)$ which converges
weakly to $\mu$. Therefore by the above argument $\exists v_n$ for
each $\mu_n$ such that
\begin{align}
(-\Delta)^sv_n+g(v_n)&=\mu_n\nonumber\\
v_n&=0~\text{in}~\mathbb{R}^N\setminus\Omega.\nonumber
\end{align}
in the very weak sense. Note that $(v_n)$ is an increasing sequence.
Choose $\eta_1$ as a particular test function which is a solution of
\begin{align}
(-\Delta)^s\eta_1&=1\nonumber\\
\eta_1&=0~\text{in}~\mathbb{R}^N\setminus\Omega,\nonumber
\end{align}
and which also has the property that
$c^{-1}\leq\frac{\eta_1}{\rho^s}\leq c$ for some $c>0$. Therefore
\begin{align*}
0\leq \int_{\Omega}(v_n+g(T_n(v_n))\eta_1)dx&=\int_{\Omega}\eta_1 d\mu_n \leq  \int_{\Omega}\eta_1 d\mu.\nonumber
\end{align*}
So we have $0\leq v_n\leq\mathbb{G}[\mu_n]$ and
$\mathbb{G}[\mu_n]\leq\mathbb{G}[\mu]$ from ($\ref{ineq3'}$). Hence,
$v_n\rightarrow v$ a.e. thus implying $v_n\rightarrow v$ in
$L^1(\Omega)$. It can also be seen that $v_n\rightarrow v$ in
$L^k(\Omega,\rho^sdx)$. We thus have $v$ is a solution to the
problem (3.3) for $\mu$ being a non negative and bounded measure.
\end{proof}
\section*{Acknowledgement}
The author Amita Soni thanks the Department of
Science and Technology (D. S. T), Govt. of India for financial
support. Both the authors also acknowledge the facilities received
from the Department of Mathematics, National Institute of Technology Rourkela.

\bibliographystyle{amsplain}

{\sc D. Choudhuri} and {\sc Amita Soni} \\
Department of Mathematics,\\
National Institute of Technology Rourkela, Rourkela - 769008,
India\\
e-mails: dc.iit12@gmail.com and soniamita72@gmail.com.
\end{document}